\def\top{\intercal}
\def\eq#1{(\ref{eq-#1})}
\def\be{\begin{equation}}
\def\ee{\end{equation}}
\def\e{\varepsilon}
\def\y{\mathbf{y}}
\def\C{\mathbf{C}}
\def\M{\mathbf{M}}
\def\N{\mathbf{N}}
\def\I{\mathbf{I}}
\def\O{\mathbf{0}}
\def\p{{\boldsymbol{\pi}}}
\def\simplex{\Delta}
\def\A{\mathbf{A}}
\def\c{\mathbf{c}}
\def\G{\mathcal{G}}
\def\X{\mathbf{X}}
\def\Y{\mathbf{Y}}
\def\M{\mathbf{M}}
\def\B{\mathbf{B}_\p}
\def\e{\mathbf{e}}
\newtheorem{theorem}{Theorem}
\newtheorem{defn}{Problem}
\newname\stableset{{\rm \textsc{independent-set}}}
\newname\mmatrix{{\rm \textsc{polytope-M-matrix}}}
\newname\posdet{{\rm \textsc{positive-determinant}}}
\newname\minradius{{\rm \textsc{minimize-radius}}}
\newname\polytopicinstability{{\rm \textsc{polytopic-instability}}}
\begin{document} 

\title{\vspace*{-50pt}NP-hardness of polytope M-matrix testing \\ and related problems}

\author{%
Nikos Vlassis\\
~\\
{\small Luxembourg Centre for Systems Biomedicine, University of Luxembourg} \\
{\small \texttt{nikos.vlassis@uni.lu}}}

\maketitle

\begin{abstract}
In this note we prove NP-hardness of the following problem: Given a set of matrices, is there a convex combination of those that is a nonsingular M-matrix? Via known characterizations of M-matrices, our result establishes NP-hardness of several fundamental problems in systems analysis and control, such as testing the instability of an uncertain dynamical system, and minimizing the spectral radius of an affine matrix function. 
\end{abstract}

\section{Introduction}

Several problems in the biological, physical, social sciences and engineering, can be modelled via matrices that possess special structure. A commonly encountered case involves the use of nonsingular M-matrices, which are positive stable matrices with all their off-diagonal entries nonpositive. M-matrices enjoy a remarkably rich theory, with connections to the Perron-Frobenius theory of nonnegative matrices, and with many applications in different scientific fields. The books of \cite{Horn91} and \cite{Berman94} provide  extensive treatments of the subject. 

Negated M-matrices constitute a special case of Hurwitz matrices that play a pivotal role in the stability of dynamical systems \citep{Khalil02}. A fundamental problem in this area is the analysis of the stability of a system under model uncertainty, with important applications in systems engineering and biology \citep{Doyle11}. Often this uncertainty manifests as a polytope of system matrices, whose overall stability or instability needs to be established \citep{Chesi10}. In this context, it is of interest to know what is the computational `effort' that it takes to test the (in)stability of a given uncertain system. 

In this note we prove NP-hardness of the problem of testing whether there exists a convex combination of a given set of matrices that is a nonsingular M-matrix. Using known characterizations of M-matrices, our result establishes NP-hardness of several other problems with practical significance, such as the problem of testing the instability of a linear time-invariant system with polytopic parameter uncertainty, and the problem of minimizing the spectral radius of an affine matrix function. NP-hardness of the latter problem was independently established recently \citep{Fercoq11}. 

\section{The \mmatrix problem}

A real matrix whose off-diagonal entries are all nonpositive is called a Z-matrix. If a Z-matrix is positive stable, that is, all its eigenvalues have positive real part, then it is called a nonsingular M-matrix \citep[chapter 6]{Berman94}. 

In this section we prove NP-hardness of the following decision problem:

\begin{defn}[The \mmatrix problem]
Given a finite set of real matrices, is there a convex combination of those that is a nonsingular M-matrix? 
\end{defn}

\begin{theorem}
The \mmatrix problem is NP-hard.
\label{np}
\end{theorem}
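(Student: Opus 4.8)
\medskip\noindent
The plan is to reduce from the \stableset problem: given a graph $G=(V,E)$ on $n$ vertices and an integer $k$ (we may assume $2\le k\le n$, the instance being trivial otherwise), decide whether $G$ has an independent set of size at least $k$. The bridge I would use is a Motzkin--Straus-type identity. Writing $A_G$ for the $0/1$ adjacency matrix of $G$, $\Delta\subseteq\mathbb{R}^n$ for the probability simplex, and $\alpha(G)$ for the independence number, one has
\[
\min_{x\in\Delta}\;x^\top(\I+A_G)\,x\;=\;\frac{1}{\alpha(G)}.
\]
Indeed ``$\le$'' follows by testing the uniform distribution on a maximum independent set, and ``$\ge$'' because on $\Delta$ we have $x^\top(\I+A_G)x=1-x^\top A_{\bar G}x$, so the left-hand side is $1$ minus the Motzkin--Straus optimum for the complement graph $\bar G$ (whose clique number is $\alpha(G)$). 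Hence, setting the rational $c:=\tfrac1{k-1}$,
\[
\alpha(G)\ge k\iff\exists\,x\in\Delta:\ x^\top(\I+A_G)\,x<c.
\]

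\medskip\noindent
Next I would encode this condition as a \mmatrix instance. For each vertex $i$ let $\pa:=(\I+A_G)\e_i$, the $0/1$ indicator of the closed neighbourhood of $i$, and put
\[
\M_i:=\begin{pmatrix}\I_n&-\e_i\\[3pt]-\pa^\top&c\end{pmatrix}\in\mathbb{R}^{(n+1)\times(n+1)},\qquad i=1,\dots,n.
\]
Because $\sum_i x_i=1$, the convex combinations of $\M_1,\dots,\M_n$ are precisely the matrices
\[
\M(x)=\sum_{i=1}^n x_i\,\M_i=\begin{pmatrix}\I_n&-x\\[3pt]-\big((\I+A_G)x\big)^\top&c\end{pmatrix},\qquad x\in\Delta.
\]
Every such $\M(x)$ is a Z-matrix: the off-diagonal entries of the leading $\I_n$ block vanish, while the remaining off-diagonal entries are $-x_j\le0$ and $-\big((\I+A_G)x\big)_j\le0$ since $x\ge0$ and $\I+A_G\ge0$. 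The key design choice is that $\M(x)$ is deliberately \emph{not} symmetric: this keeps its leading $n\times n$ block equal to $\I_n$ --- so that all leading principal minors of order at most $n$ equal $1$ --- even though the quadratic form that must be captured is indefinite.

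\medskip\noindent
It then remains to see when $\M(x)$ is positive stable. A Schur complement on the $\I_n$ block yields the characteristic polynomial
\[
\det\big(\lambda\I_{n+1}-\M(x)\big)=(\lambda-1)^{\,n-1}\big[(\lambda-1)(\lambda-c)-q(x)\big],\qquad q(x):=x^\top(\I+A_G)\,x,
\]
so the eigenvalues of $\M(x)$ are $1$ with multiplicity $n-1$, together with the two roots of $\lambda^2-(1+c)\lambda+(c-q(x))$. Since a real quadratic $\lambda^2+a\lambda+b$ has both roots in the open right half-plane exactly when $-a>0$ and $b>0$, and here $-a=1+c>0$ automatically, the Z-matrix $\M(x)$ is a nonsingular M-matrix iff $c-q(x)>0$. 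Together with the identity above, some convex combination of $\M_1,\dots,\M_n$ is a nonsingular M-matrix iff $\min_{x\in\Delta}q(x)<\tfrac1{k-1}$, i.e.\ iff $\alpha(G)\ge k$; and the $\M_i$ are constructed in polynomial time, completing the reduction. The one step I expect to be genuinely non-routine is the design of this gadget: one needs a family of Z-matrices whose positive stability tracks the non-convex set $\{x\in\Delta:q(x)<c\}$ exactly, and it is the asymmetric bottom row $-\big((\I+A_G)x\big)^\top$ that simultaneously preserves the Z-structure, keeps the low-order leading minors trivially positive, and makes the top-order minor realise the desired indefinite quadratic. (The same construction also shows \posdet is NP-hard, since for this family ``nonsingular M-matrix'' and ``$\det\M(x)>0$'' are equivalent.)
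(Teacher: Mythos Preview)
Your proof is correct and follows essentially the same route as the paper: a reduction from \stableset\ via the Motzkin--Straus identity, encoded in an $(n{+}1)\times(n{+}1)$ block Z-matrix whose Schur complement over the leading $\I_n$ block produces the quadratic form $x^\top(\I+A_G)x$. The only cosmetic differences are that your gadget is the transpose of the paper's, you argue positive stability via the characteristic polynomial rather than the leading-principal-minors criterion, and you set the corner constant to $c=\tfrac{1}{k-1}$, which handles the strict-versus-nonstrict inequality more cleanly than the paper's choice $\tfrac{1}{j}$.
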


\begin{proof}
We establish a polynomial-time reduction from the \stableset problem. This problem asks, for a given undirected graph $\G=(V,E)$ and a positive integer $j \leq |V|$, whether $\G$ contains an independent set $V'$ (a set of pairwise non-adjacent vertices) with size $|V'| \geq j$. This problem is NP-complete~\citep{Garey79}. 

An instance of \mmatrix takes as input $k$ real $n \times n$ matrices $\A_i$, with $i=1,\ldots,k$, and asks whether there exists a nonnegative vector $\p=(\pi_1,\ldots,\pi_k)^\top$ with $\sum_{i=1}^k \pi_i = 1$, such that the matrix $\B = \sum_{i=1}^k \pi_i \A_i$ is a nonsingular M-matrix. We will prove NP-hardness of the problem for the special case in which $k=n$, that is, when the number of input matrices equals their dimension. For any input instance of \stableset, our reduction will construct an instance of \mmatrix, such that a polynomial-time algorithm for deciding the latter would imply a polynomial-time for deciding the former.

Let $(\G,j)$ be an instance of \stableset, and let $\C$ be the $n \times n$ adjacency matrix of the graph $\G$.
The matrix $\C$ is a symmetric zero-one matrix with all zeros in the main diagonal. Let $\c_i$ denote the $i$'th column of $\C$, and let $\e_i$ denote the length-$n$ vector with 1 in the $i$'th entry and all other entries zero. The reduction constructs $n$ block matrices $\A_i$, for $i=1,\ldots,n$, of size $(n+1) \times (n+1)$:
 \be
 \A_i = 
 \begin{bmatrix} 
  \I & -(\e_i+\c_i) \\
  -\e_i^\top  & \frac{1}{j}   
 \end{bmatrix},
 \label{eq-A}
\ee
where $\I$ is the $n \times n$ identity matrix. Note that each $\A_i$ is a Z-matrix.
 
Suppose there exists a polynomial-time algorithm that allows us to find a vector $\p=(\pi_1,\ldots,\pi_n)^\top$ in the probability simplex $\simplex$, for which the matrix $\B = \sum_{i=1}^n \pi_i \A_i$ is a nonsingular M-matrix. Using the definition of $\A_i$ from \eq{A}, the matrix $\B$ can be written as
\be
 \B = 
 \begin{bmatrix} 
  \I & -(\I+\C)\p \\
  -\p^\top  & \frac{1}{j}   
 \end{bmatrix}.
 \label{eq-B}
\ee
A necessary and sufficient condition for a Z-matrix to be a nonsingular M-matrix is that all its leading principal minors are positive \citep[p.\,135, condition E$_{17}$]{Berman94}. Consequently, since the $(1,1)$ block of $\B$ is the identity matrix, the matrix $\B$ will be a nonsingular M-matrix if and only if $\det (\B) > 0$. 
Using the Schur determinant formula for block matrices \citep[exercise 6.1]{Berman94}, the determinant of $\B$ can be expressed as
\be
  \det (\B) = \frac{1}{j}  - \p^\top (\I+\C) \p,
  \label{eq-det}
\ee
and hence $\det (\B) > 0$ implies $\p^\top (\I+\C) \p < \frac{1}{j}$. Moreover, for any graph $\G$ with adjacency matrix $\C$, the following is true \citep{Motzkin65}:
\be
  \frac{1}{\alpha(\G)} = \min_{\y \in \simplex} \, \y^\top (\I + \C) \y,
\label{eq-motzkin}
\ee
where $\alpha(\G)$ is the size of the maximum independent set of $\G$.
Using \eq{det} and \eq{motzkin}, we conclude that the existence of a vector $\p \in \simplex$ that satisfies 
$\det (\B) > 0$
implies 
$\frac{1}{\alpha(\G)} < \frac{1}{j}$, or $\alpha(\G) > j$, and hence it implies the existence of an independent set $V' \subseteq V$ in $\G$ with size $|V'| \geq j$. 

Since all steps in the above reduction take time at most polynomial in the size of the problem input, we conclude that the existence of a polynomial-time algorithm for deciding \mmatrix would also allow us to decide \stableset in polynomial-time, implying P=NP.
\end{proof}

\paragraph{Remarks:} 
Effectively the proof hinges on the fact that nonsingular M-matrices are closed under Schur complementation \citep[p.\,128, problem 15]{Horn91}, a property that they share with the class of positive definite matrices. However, unlike positive definite matrices, M-matrices are not closed under regular matrix addition, accounting for the difficulty of optimization problems involving M-matrices. If the input matrices are symmetric, the problem is convex, since symmetry of a nonsingular M-matrix implies its positive definiteness \citep[p.\,141, exercise 2.6]{Berman94}, in which case the corresponding optimization problem can be solved by semidefinite programming \citep{Boyd04}.


\section{Corollaries}

\cite{Berman94} provide a list of more than 50 equivalent conditions for a Z-matrix to be a nonsingular M-matrix. Our result establishes NP-hardness of all `polytopic' versions of them.
For instance, using conditions D$_{16}$ and N$_{38}$ of \citet[p.\,135,137]{Berman94}, we directly establish NP-hardness of the problems of testing if there is a convex combination of a given set of matrices, whose real eigenvalues are all positive (D$_{16}$), or whose inverse is a nonnegative matrix (N$_{38}$).

Below we briefly describe three problems whose NP-hardness follows from theorem \ref{np}, and which are of special interest for applications.

\begin{defn}[Polytopic instability]
Given a set of matrices, is there a convex combination of those that is Hurwitz stable?
\end{defn}
If $\B$ is a nonsingular M-matrix, then $-\B$ is Hurwitz. Therefore, our result establishes NP-hardness of the problem of testing the {\em instability} of a linear time-invariant system with polytopic uncertainty, providing evidence for the difficulty of obtaining low-order necessary conditions for this problem \citep{Chesi10}. The problem of testing the {\em stability} of a polytopic uncertain system is also NP-hard  \citep{Gurvits09}. We refer to \cite{Blondel97} for other complexity results in control theory.

\begin{defn}[Positive determinant]
Given a set of symmetric matrices, is there a convex combination of those whose determinant is positive?
\end{defn}
NP-hardness of this problem for non-symmetric matrices is established in the course of the proof of theorem \ref{np}. NP-hardness of the case of symmetric matrices follows from a result by \citet[theorem 5]{Grenet11} that shows that, for any given $n \times n$ matrix $\X = [x_{ij}]$, one can construct a symmetric matrix $\Y$ of dimension $O(n^3)$ with entries in $\{x_{ij}\} \cup \{0;1;-1;1/2\}$ such that $\det(\X)=\det(\Y)$.  Note that the problem becomes convex when we constrain the feasible region to positive definite matrices \citep[p.\,73]{Boyd04}.

\begin{defn}[Minimize spectral radius]
Given a set of nonnegative matrices, is there a convex combination of those whose spectral radius is less than one? 
\end{defn}
NP-hardness of this problem follows from theorem \ref{np}. Note that the matrix $\B$ in \eq{B} can be written as $\B = \I - \M_\p$, where the matrix $\M_\p = \sum_{i=1}^n \pi_i \N_i$ is a convex combination of nonnegative matrices
 \be
 \N_i = 
 \begin{bmatrix} 
  \O & \e_i+\c_i \\
  \e_i^\top  & 1-\frac{1}{j}   
 \end{bmatrix}.
 \label{eq-N}
\ee
From \citet[p.\,133, definition 1.2]{Berman94} it then follows that $\B$ is a nonsingular M-matrix if and only if the spectral radius of $\M_\p$ is strictly less than one.

This result establishes that the problem of globally minimizing spectral radius over a matrix polytope is NP-hard. The spectral radius of a nonsymmetric real matrix affine function such as $\M_\p$ is generally a nonconvex function of the parameter vector $\p$, and the problem of globally optimizing the spectral radius is a difficult one \citep{Overton88,Han99,Neumann07}. Our result provides evidence for this `difficulty'. NP-hardness of spectral radius minimization was also recently established for a class of nonnegative matrix affine functions \citep{Fercoq11}. 

We note that the problem becomes convex, and often admits tractable exact global optimization algorithms, in the case of symmetric matrices \citep{Overton88,Fiedler92}, or in the case of irreducible nonnegative matrices whose entries are posynomial functions of a parameter vector $\p$. In the latter case, minimizing spectral radius under posynomial constraints in $\p$ can be solved in polynomial time via the Collatz-Wielandt formula \citep{Friedland81} and geometric programming \citep[p.\,165]{Boyd04}. Note that in our \mmatrix problem the simplex equality constraint $\sum_{i=1}^n \pi_i = 1$ is not a posynomial constraint (it would be if the equality were replaced by the inequality `$\leq$').

  
\subsection*{Acknowledgments}

Many thanks to Michael Tsatsomeros and Leonid Gurvits for their feedback.

\bibliographystyle{apalike}

\end{document}